\documentclass[12pt]{amsart}
\usepackage[latin1]{inputenc}
\usepackage{amssymb}
\usepackage{xcolor}

\setlength{\topmargin}{0pt}
\setlength{\headheight}{26pt}

\setlength{\textwidth}{16.5cm}
\setlength{\textheight}{23cm}
\setlength{\oddsidemargin}{0pt}
\setlength{\evensidemargin}{0pt}
\setlength{\headsep}{18pt}
\voffset=-0.5in

\newtheorem{prop}{Proposition}[section]

\newtheorem{thm}[prop]{Theorem}
\newtheorem*{quest}{Question}
\newtheorem*{remark}{Remark}

\theoremstyle{definition}
\newtheorem*{exe}{Examples}

\renewcommand{\d}{\mathbf{d}}
\begin{document}
\noindent

\title{On the density of sumsets and product sets}

\author[Hegyv\'ari \textit{et al}]{Norbert Hegyv\'ari}
\address{Norbert Hegyv\'{a}ri, ELTE TTK,
E\"otv\"os University, Institute of Mathematics, H-1117
P\'{a}zm\'{a}ny st. 1/c, Budapest, Hungary}
\email{hegyvari@elte.hu}

\author[]{Fran\c cois Hennecart}
\address{Fran\c cois Hennecart,
Univ. Jean-Monnet, Institut Camille Jordan CNRS 5208,
23 rue Michelon, 42023 Saint-\'Etienne cedex 2, France} \email{francois.hennecart@univ-st-etienne.fr}

\author[]{P\'eter P\'al Pach $^1$}
\thanks{${}^1$ Supported by the National Research, Development and Innovation Office of Hungary
  (Grant Nr. NKFIH (OTKA) PD115978 and NKFIH (OTKA) K124171) and the J\'anos Bolyai Research
  Scholarship of the Hungarian Academy of Sciences.}
\address{P\'eter P\'al Pach,
Budapest University of Technology and Economics, 1117 Budapest, Magyar tud\'osok k\"or\'utja 2, Hungary}

\email{ppp@cs.bme.hu}
\date{\today}
\maketitle

\begin{abstract}
In this paper some links between the density of a set of integers and the density of its sumset, product set and set of subset sums are presented.
\end{abstract}

\section{\bf Introduction and notations}

In the field of additive combinatorics a popular topic is to compare the densities of different sets (of, say, positive integers). The well-known theorem of Kneser  gives a description of the sets $A$ having lower density $\alpha$ such that the density of $A+A:=\{a+b\,:\, a,b\in A\}$ is less than $2\alpha$ (see for instance \cite{RO}). The analogous question with the product set $A^2:=\{ab\,:\, a,b\in A\}$ is apparently more complicated.

For any set $A\subset \mathbb{N}$ of natural numbers, we define the lower asymptotic density $\underline{\d}A$, the upper asymptotic density
$\overline{\d}A$ and the asymptotic density $\d A$ in the natural way:
$$
\underline{\mathbf{d}}A= \liminf_{n\to\infty}\frac{|A\cap[1,n]|}{n},
\quad
\overline{\mathbf{d}}A= \limsup_{n\to\infty}\frac{|A\cap[1,n]|}{n}
$$
known as the lower and upper asymptotic densities.
If the two values coincide, then we denote by $\mathbf{d}A$ the common value and call it the \textit{asymptotic density} of $A$.

Throughout the paper $\mathbb{N}$ denotes the set of positive integers and $\mathbb{N}_0:=\mathbb{N}\cup \{0\}$. We will use the notion
$A(x)=\{n\in A: n\leq x\}$ for $A\subseteq \mathbb{N}$ and $x\in \mathbb{R}$. For functions $f,g:\mathbb{N}\to \mathbb{R}_+$ we write $f=O(g)$ (or $f\ll g$), if there exists some $c>0$ such that $f(n) \leq cg(n)$ for large enough $n$.

In Section 2 we investigate the connection between the (upper-, lower-, and asymptotic) density of a set of integers and the density of its sumset. In Section 3 we give a partial answer to a question of Erd\H os by giving a necessary condition for the existence of the asymptotic density of the set of subset sums of a given set of integers. 
Finally, in Section 4 we consider analogous problems for product sets.

\section{\bf Density of sumsets}

 For subsets $A,B$ of an additive monoid $G$,
the sumset $A+B$ is defined to be the set of all
sums $a+b$ with $a\in A$, $b\in B$. For $G=(\mathbb{N}_0,+)$ the following clearly hold:
\begin{align*}
\underline{\d}A&\le\overline{\d}A,\\
\underline{\d}A&\le\underline{\d}(A+A),\\
\overline{\d}A&\le\overline{\d}(A+A).
\end{align*}

\smallskip
We shall assume that our sets $A$ are normalized in the sense that $A$ contains $0$
and $\gcd(A)=1$.

First observe that there exists a set of integers $A$ not having an asymptotic density
such that its sumset $A+A$ has a density:
for instance $A=\{0\}\cup\bigcup_{n\ge0}[2^{2n},2^{2n+1}]$ has lower density $1/3$, upper density $2/3$
and its sumset $A+A$ has density $1$, since it contains every nonnegative integer.
For this kind of sets $A$, we denote
respectively
\begin{align*}
\underline{\d}A&=:\alpha_A,\\
\overline{\d}A&=:\beta_A,\\
\d(A+A)&=:\gamma_A,\\
(\alpha_A,\beta_A,\gamma_A)&=:p_A,
\end{align*}
and we have
$$
\alpha_A \le \beta_A \le \gamma_A.
$$
The first question arising from this is to decide whether or not for any
$p=(\alpha,\beta,\gamma)$ such that $0\leq \alpha\le \beta \le \gamma\leq 1$ there exists
a set $A$ of integers such that $p=p_A$. This question has no positive answer in general, though the following
weaker statement holds.

\begin{prop}\label{p1}
Let $0\le \alpha\le 1$. There exists a normalized set $A\subset\mathbb{N}$ such that
$\d A=\alpha$ and $\d(A+A)=1$.
\end{prop}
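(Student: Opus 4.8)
The plan is to write $A$ as a union $A=C\cup D$ in which the two pieces have complementary jobs: $D$ is a set of density exactly $\alpha$, while $C$ is a sparse ``covering'' set, of density $0$, whose role is to force $C+C$ (hence $A+A$) to be almost all of $\mathbb{N}$. If $C$ has density $0$, then from $|D\cap[1,n]|\le|A\cap[1,n]|\le|D\cap[1,n]|+|C\cap[1,n]|$ and a squeeze we get $\mathbf{d}A=\mathbf{d}D=\alpha$; and since $C+C\subseteq A+A$ we get $\mathbf{d}(A+A)=1$ once $\mathbf{d}(C+C)=1$. Putting $0,1\in C$ makes $A$ normalized. So the whole proposition reduces to producing such a $C$ and exhibiting a set $D$ of density exactly $\alpha$.

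For $D$ take $D=\{\lfloor n/\alpha\rfloor:n\ge1\}$ when $\alpha>0$, and $D=\emptyset$ when $\alpha=0$; a one-line count gives $|D\cap[1,N]|=\alpha N+O(1)$, so $\mathbf{d}D=\alpha$. For $C$ I would use a ``postage stamp'' construction block by block along a rapidly growing sequence $N_1<N_2<\cdots$, say $N_{k+1}=\lceil N_k^{3/2}\rceil$ with $N_1$ large. The elementary fact to invoke is: for every $M$ the set $B_M=\{0,1,\dots,s-1\}\cup(\{0,s,2s,\dots\}\cap[0,M])\cup\{M\}$ with $s=\lceil\sqrt M\rceil$ satisfies $0,M\in B_M$, $|B_M|=O(\sqrt M)$ and $B_M+B_M\supseteq[0,2M]$ (check $\{0,\dots,s-1\}+\{0,s,2s,\dots\}\supseteq[0,M]$, then shift by $M$ to also cover $[M,2M]$). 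Now with $L_k=N_{k+1}-N_k-1$ put $S_k=(N_k+1)+B_{L_k}\subseteq(N_k,N_{k+1}]$ and $C=\{0,1\}\cup\bigcup_{k\ge1}S_k$; then $S_k+S_k=(2N_k+2)+(B_{L_k}+B_{L_k})\supseteq[2N_k+2,\,2N_{k+1}]$.

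Two routine verifications then finish the argument, and these are where the (minor) work lies. First, $\mathbf{d}C=0$: for $N_k\le n<N_{k+1}$ one has $|C\cap[1,n]|\le 2+\sum_{j\le k}|S_j|=2+\sum_{j\le k}O(\sqrt{N_{j+1}})$, and because $N_{k+1}$ grows super-exponentially this sum is $O(\sqrt{N_{k+1}})=O(N_k^{3/4})$; dividing by $n\ge N_k$ shows the ratio tends to $0$. (This is exactly why $N_{k+1}$ must be chosen $o(N_k^2)$: the cost $\sqrt{N_{k+1}}$ of covering block $k$ has to be negligible against $N_k$.) Second, $\mathbf{d}(C+C)=1$: the covered intervals $[2N_k+2,2N_{k+1}]$ and $[2N_{k+1}+2,2N_{k+2}]$ are consecutive up to the single missing point $2N_{k+1}+1$, so $C+C$ omits, among integers exceeding $2N_1$, at most one point per block $[2N_{k+1},2N_{k+2}]$ — a set of density $0$. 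Hence $\mathbf{d}(A+A)=1$, $\mathbf{d}A=\alpha$, and $0,1\in A$ gives $\gcd(A)=1$, so $A$ is as required. The only genuine obstacle is getting the block-covering set $C$ sparse enough: the postage-stamp trick is what drops the covering cost from linear to square-root in the block length, and the super-exponential growth of $(N_k)$ then kills it; everything else (the density of $D$, the squeeze, the gcd condition, the off-by-one gaps between covered intervals) is routine.
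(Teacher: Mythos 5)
Your overall architecture (sparse covering set $C$ plus a density-$\alpha$ set $D$, then a squeeze) matches the paper's, which takes $C$ to be a thin additive basis of order $2$. But your explicit construction of $C$ contains a genuine error: the claim that $B_M+B_M\supseteq[0,2M]$ is false. The parenthetical justification --- ``shift by $M$ to also cover $[M,2M]$'' --- adds $M$ to elements of $B_M+B_M$, i.e.\ it produces elements of $B_M+B_M+B_M$; it shows $B_M$ is a basis of order $3$ for $[0,2M]$, not of order $2$. Concretely, for $M=100$, $s=10$, one has $B_M=\{0,\dots,9\}\cup\{10,20,\dots,100\}$ and $B_M+B_M=[0,109]\cup\{110,120,\dots,200\}$, so e.g.\ $155\notin B_M+B_M$; in general $B_M+B_M$ covers only $[0,M+O(\sqrt M)]$ plus a density-$\tfrac1s$ set of the upper half $[M,2M]$. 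Consequently $S_k+S_k$ covers only about $[2N_k,\,N_k+N_{k+1}]$, i.e.\ roughly half of the block $[2N_k,2N_{k+1}]$ you need, and with only the diagonal sums $S_k+S_k$ your $C+C$ would have lower density about $\tfrac12$ at the points $2N_{k+1}$, so the conclusion $\mathbf{d}(C+C)=1$ does not follow as written.

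The gap is repairable within your framework: the missing range $[N_k+N_{k+1},\,2N_{k+1}]$ is covered by the cross sums $S_k+S_{k+1}$, because $S_k$ contains the shifted progression $(N_k+1)+\{0,s_k,2s_k,\dots\}$ of gap $s_k\approx\sqrt{L_k}$ while $S_{k+1}$ contains the interval $(N_{k+1}+1)+\{0,\dots,s_{k+1}-1\}$ of length $s_{k+1}\ge s_k$, and their sum is the full interval $[N_k+N_{k+1}+2,\,2N_{k+1}+1]$. You would need to add this argument (and the monotonicity $s_{k+1}\ge s_k$). Alternatively, do what the paper does: simply invoke the existence of a thin additive basis $B$ of order $2$ with $0\in B$ and $|B(x)|=o(x)$ and set $A=B\cup D$, which gives $A+A=\mathbb{N}_0$ outright; or, for $\alpha>0$, note that $D=\{\lfloor n/\alpha\rfloor\}$ already has gaps at most $\lceil 1/\alpha\rceil$, so taking $C=\{0,1,\dots,\lfloor 1/\alpha\rfloor\}$ makes $C+D$ cover everything and no sparse basis is needed at all --- your block machinery is only genuinely required in the case $\alpha=0$.
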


\begin{proof}
 Let $0\in B$ be a thin additive basis, that is, a basis containing 0 and satisfying $|B(x)|=o(x)$ as $x\to\infty$. Now, let $A=B\cup\{\lfloor n/\alpha\rfloor,\ n\ge1\}$. Then $A$ is a normalized set satisfying
 $A+A=\mathbb{N}_0$ and $\d A=\alpha$.
 
 (Note that $B=\{0,1,2,\dots,\lfloor 1/\alpha \rfloor\}$ is also an appropriate choice for $B$.) 
\end{proof}

\begin{remark}
We shall mention that  Faisant et al \cite{Fa} proved the following related result:  for any $0\le  \alpha \le 1$ and any positive integer $k$, there exists a sequence $A$ such that
 $\d (jA)= j\alpha /k$, $j=1,\dots,k$.
\end{remark}

After a conjecture stated by Pichorides,  the related question about the characterisation of the two-dimensional
domains $\{(\underline{\d}B,\overline{\d}B)\,:\, B\subset A\}$ has been solved
(see \cite {GG} and \cite{FH}).

\smallskip
Note that if the density   $\gamma_A$ exists, then $\alpha_A$, $\beta_A$ and $\gamma_A$  have to satisfy some strong conditions.
For instance, by Kneser's theorem, we know that if for some set $A$ we have $\gamma_A<2\alpha_A$,
then $A+A$ is, except possibly a finite number of elements, a union of
arithmetic progressions in $\mathbb{N}$ with the same difference. This implies that
$\gamma_A$ must be a rational number. From the same theorem of Kneser, we
also deduce that if $\gamma_A<3\alpha_A/2$, then $A+A$ is an arithmetic progression
from some point onward. It means that $\gamma_A$ is a unit fraction, hence $A$ contains
any sufficiently large integer, if we assume that $A$ is normalized.

Another strong connection between $\alpha_A$ and $\gamma_A$ can be deduced from
Freiman's theorem on the addition of sets (cf. \cite{Fr}). Namely, every normalized set $A$ satisfies
$$
\gamma_A\ge\frac{\alpha_A}{2}+\min\Big(\alpha_A,\frac12\Big).
$$

A related but more surprising statement is the following:

\begin{prop}\label{p3}
There is a set of positive integers for which $\d(A)$ does exist and $\d(A+A)$ does not exist.
\end{prop}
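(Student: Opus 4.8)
The plan is to exhibit a set $A$ with $\mathbf d(A)=0$ — which certainly exists — but with $\underline{\mathbf d}(A+A)=0<\overline{\mathbf d}(A+A)$, so that $\mathbf d(A+A)$ fails to exist. Two competing constraints must be reconciled. On the one hand, $\mathbf d(A)=0$ forbids $A$ from containing long blocks of consecutive integers (a block $[z,2z]\subseteq A$ already forces $|A\cap[1,2z]|\ge z$); on the other hand we want $A$, on suitable long initial segments, to be additively so rich that $A+A$ captures a positive proportion of an interval. A Sidon set resolves the tension: a Sidon set of size $m$ inside an interval of length $\asymp m^2$ is extremely thin (relative density $\asymp 1/m\to0$), yet its sumset has $\binom{m+1}{2}\asymp m^2$ elements, i.e.\ positive relative density in the doubled interval.

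Concretely, I would fix a very rapidly increasing sequence $x_1<x_2<\cdots$ (say $x_{k+1}\ge x_k^{10}$) and a slowly increasing sequence $\ell_k\to\infty$ with $\ell_k\le x_k$ (say $\ell_k=k^2$), and put $A=\bigcup_k A_k$, where $A_k\subseteq[x_k,2x_k]$ is built as follows: split $[x_k,2x_k]$ into consecutive intervals $I_{k,1},I_{k,2},\dots$ of length $\ell_k$, and in each $I_{k,j}$ place the translate by $\min I_{k,j}$ of the classical Sidon set $\{2p_ki+(i^2\bmod p_k):0\le i<p_k\}$, where $p_k$ is the largest prime with $2p_k^2\le\ell_k$ (so $p_k\asymp\sqrt{\ell_k}$, by Bertrand). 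Each block $B_{k,j}$ then has $p_k$ elements, lies inside $I_{k,j}$, satisfies $B_{k,j}+B_{k,j}\subseteq[2\min I_{k,j},\,2\min I_{k,j}+2\ell_k)$ and $|B_{k,j}+B_{k,j}|=\binom{p_k+1}{2}$; crucially, these ``doubled intervals'' are pairwise disjoint and together tile essentially all of $[2x_k,4x_k]$.

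Then I would verify three points. (i) $\mathbf d(A)=0$: for $y\le x_{k+1}$ one has $|A\cap[1,y]|\le\sum_{j<k}|A_j|+|A_k\cap[1,y]|$; the first sum is $o(x_k)$ by the rapid growth of the $x_j$, and $A_k$ meets $[1,y]$ in at most $((y-x_k)/\ell_k+1)p_k\le (y-x_k)/\sqrt{\ell_k}+\sqrt{\ell_k}$ points, so $|A\cap[1,y]|/y\to0$. (ii) $\underline{\mathbf d}(A+A)=0$: evaluating at $y=x_{k+1}$, every sum $a+a'$ with $a,a'\in\bigcup_{j\le k}A_j$ is at most $4x_k$, hence $|(A+A)\cap[1,x_{k+1}]|\le\bigl(\sum_{j\le k}|A_j|\bigr)^2=O(x_k^2/\ell_k)=o(x_{k+1})$. (iii) $\overline{\mathbf d}(A+A)>0$: evaluating at $y=4x_k$, $(A+A)\cap[1,4x_k]\supseteq\bigcup_j(B_{k,j}+B_{k,j})$, a disjoint union of $\asymp x_k/\ell_k$ sets each of size $\asymp\ell_k$, hence of total size $\ge c\,x_k$ for an absolute constant $c>0$. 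Combining (ii) and (iii) gives $\underline{\mathbf d}(A+A)=0<c/4\le\overline{\mathbf d}(A+A)$, so $\mathbf d(A+A)$ does not exist. Finally, adjoining $\{0,1\}$ to $A$ makes it normalized without disturbing any density, since $\{0,1\}+A$ adds only $O(|A\cap[1,y]|)=o(y)$ new elements below any $y$.

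The main obstacle is precisely the compatibility of (i) with (iii): one must be certain that packing many Sidon blocks keeps $A$ of density zero while still forcing $A+A$ to occupy a positive fraction of each segment $[1,4x_k]$. This is exactly what the scaling ``$p_k\asymp\sqrt{\ell_k}$ with block length $\ell_k\to\infty$'' delivers — the per-unit density of $A$ inside $[x_k,2x_k]$ is $\asymp1/\sqrt{\ell_k}\to0$, whereas $A+A$ fills a fixed positive proportion of every doubled block, and the long empty gaps $(2x_k,x_{k+1})$ then pull the density of $A+A$ back down to $0$ along the subsequence $x_{k+1}$.
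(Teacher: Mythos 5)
Your construction is correct, but it takes a genuinely different route from the paper. The paper keeps $A$ of positive density: it fixes $U=\{0,2,3\}$ and $V=\{0,1,2\}$ modulo $7$ and alternates, over very long ranges $[7N_{2k},7N_{2k+1}]$ and $[7N_{2k+1},7N_{2k+2}]$, between the two periodic patterns $U+7\mathbb{Z}$ and $V+7\mathbb{Z}$. Since $|U|=|V|=3$, the set $A$ has exact density $3/7$; since $U+(U\cup V)$ covers all residues mod $7$ while $V+(U\cup V)$ misses one, the sumset $A+A$ has upper density $1$ and lower density $6/7$, so its density does not exist. Your example instead lives at density zero and exploits the Sidon mechanism: blocks of $\asymp\sqrt{\ell_k}$ Sidon elements in intervals of length $\ell_k\to\infty$ keep $\mathbf{d}(A)=0$ while each doubled block contributes a fixed positive proportion of its interval to $A+A$, and the huge gaps $x_{k+1}\ge x_k^{10}$ drag $\underline{\mathbf{d}}(A+A)$ down to $0$. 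I checked the three verifications and they go through: the Erd\H{o}s--Tur\'an set $\{2p_ki+(i^2\bmod p_k):0\le i<p_k\}$ is indeed Sidon in $[0,2p_k^2)$, the diagonal sumsets $B_{k,j}+B_{k,j}$ land in pairwise disjoint intervals so their sizes $\binom{p_k+1}{2}\gg\ell_k$ add up to $\gg x_k$, and sums involving $A_{k+1}$ exceed $x_{k+1}$ so they cannot spoil the count in step (ii). The trade-off between the two approaches: the paper's example is more elementary, gives exact values $(3/7,6/7,1)$ for the relevant densities, and exhibits the phenomenon for a set of \emph{positive} density (which is the harder regime, since a positive-density $A$ forces $A+A$ to be large everywhere); your example requires the Sidon-set input but shows the oscillation can be made maximal at the bottom end, $\underline{\mathbf{d}}(A+A)=0<\overline{\mathbf{d}}(A+A)$, which the paper's construction cannot achieve. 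One cosmetic point: the proposition asks for a set of positive integers, so rather than adjoining $\{0,1\}$ you should adjoin only $\{1\}$ (or nothing at all, since normalization is not required here); as you note, this changes no density.
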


\begin{proof}
Let us take $U=\{0,2,3\}$ and $V=\{0,1,2\}$,
then observe that
$$
U+(U\cup V)=\{0,1,2,3,4,5,6\}\quad
V+(U\cup V)=\{0,1,2,3,4,5\}.
$$
Let $(N_k)_{k\ge 0}$ be a sufficiently quickly increasing sequence of integers with $N_0=0$, $N_1=1$,
and define $A$ by
$$
A=(U\cup V)\cup\bigcup_{k\ge1}\Big((U+7\mathbb{Z})\cap[7N_{2k},7N_{2k+1}]\cup (V+7\mathbb{Z})\cap[7N_{2k+1},7N_{2k+2}]\Big).
$$
Then $A$ has density $3/7$. Moreover, for any $k\ge0$
$$
[7N_{2k},7N_{2k+1}]\subset A+A,
$$
thus $\overline{\d}(A+A)=1$, if we assume $\lim_{k\to\infty}N_{k+1}/N_k=\infty$.

 \noindent
We also have
$$
(A+A)\cap [14N_{2k-1},7N_{2k}]=(\{0,1,2,3,4,5\}+7\mathbb{N})\cap
[14N_{2k-1},7N_{2k}],
$$
hence $\underline{\d}(A+A) = 6/7$ using again the assumption that  $\lim_{k\to\infty}N_{k+1}/N_k=\infty$.
\end{proof}

\medskip

For any set $A$ having a density, let
\begin{align*}
{\d}A&=:\alpha_A,\\
\underline{\d}(A+A)&=:\underline{\gamma}_A,\\
\overline{\d}(A+A)&=:\overline{\gamma}_A,\\
(\alpha_A,\underline{\gamma}_A,\overline{\gamma}_A)&=:q_A,
\end{align*}
then we have
$$
\alpha_A\le \underline{\gamma}_A\le \overline{\gamma}_A.
$$
A question similar to the one   asked for $p_A$ can be stated as follows: given $q=(\alpha, \underline{\gamma}, \overline{\gamma})$
such that $0\le \alpha\le \underline{\gamma}\le \overline{\gamma}\leq 1$, does there exist a set $A$ such that $q=q_A$ ? 

\smallskip
We further mention an interesting question of Ruzsa: does there exist $0<\nu<1$ and a constant $c=c(\nu)>0$ such that for any set $A$ having a  density,
$$
\underline{\d}(A+A)\geq c\cdot (\overline{\d}(A+A))^{1-\nu} (\d A)^{\nu}\,?
$$
Ruzsa proved (unpublished) that in case of an affirmative answer, we  necessarily have $\nu\ge 1/2$.

\section{\bf Density of subset sums}

\medskip

Let $A=\{a_1 < a_2 < \cdots \}$ be a sequence of positive integers. Denote the set of all subset sums of $A$ by
$$
P(A):=\Big\{\sum_{i=1}^k\varepsilon_ia_i \,:\, k\ge0,\ \varepsilon_i\in \{0,1\}\ (1\le i\le k) \Big\}.
$$
Zannier conjectured and Ruzsa proved that the condition $a_n\leq 2a_{n-1}$ implies that the density $\mathbf{d}(P(A))$ exists (see \cite{r1}).
Ruzsa also asked the following questions:
\begin{itemize}
\item[i)] Is it true that for every pair of real numbers $0\leq \alpha\leq \beta\leq 1$, there exists a sequence of integers for which $\underline{\mathbf{d}}(P(A))=\alpha; \ \overline{\mathbf{d}}(P(A))=\beta$\,? This question was answered positively in \cite{r2}. 

\item[ii)] Is it true that the condition $a_n\leq a_1+a_2+\dots +a_{n-1}+c$ also implies that $\mathbf{d}(P(A))$ exists\,?

\end{itemize}

\noindent
We shall prove the following statement.

\begin{prop}\label{p33}
Let $(a_n)_{n=1}^\infty$ be a sequence of positive integers. Assume that for some function $\theta$ satisfying $\theta(k)\ll \frac{k}{(\log k)^2}$ we have $$|a_n-s_{n-1}|=\theta(s_{n-1})\text{ for every }n,$$
where $s_{n-1}:=a_1+a_2+\dots+a_{n-1}$.\\
\noindent
 Then $\mathbf{d}(P(A))$ exists.
\end{prop}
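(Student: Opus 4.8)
The plan is to exploit the near-recursion $a_n = s_{n-1} + \varepsilon_n\theta(s_{n-1})$ with $\varepsilon_n\in\{-1,+1\}$ to control how $P(A)$ fills up the dyadic-type blocks $[s_{n-1}, s_n]$. Write $P_n := P(\{a_1,\dots,a_n\})$, so that $P(A) = \bigcup_n P_n$ and $\max P_n = s_n$. The key structural fact I would first establish is that, since $a_n$ is within $\theta(s_{n-1}) = o(s_{n-1})$ of $s_{n-1}$, we have $P_n = P_{n-1} \cup (a_n + P_{n-1})$, and the two pieces $P_{n-1}\subset[0,s_{n-1}]$ and $a_n+P_{n-1}\subset[a_n, a_n+s_{n-1}]$ overlap in the window $[a_n, s_{n-1}]$ (when $\varepsilon_n=-1$) or are nearly adjacent with a gap of size $\theta(s_{n-1})$ (when $\varepsilon_n=+1$). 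Thus $P_n\cap[0,s_n]$ differs from the ``doubled'' set only near the junction point $s_{n-1}$, in a zone of width $O(\theta(s_{n-1}))$.

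The main quantitative step is to track the counting function $f(x) := |P(A)\cap[1,x]|$ and show $f(s_n)/s_n$ converges. I would set $d_n := |P_n|/s_n$ and derive a recursion: $|P_n| = 2|P_{n-1}| - |P_{n-1}\cap(a_n+P_{n-1})|$, where the overlap term is the number of representations-collisions in the junction window and is $O(\theta(s_{n-1})) = o(s_{n-1})$ by hypothesis. Since also $s_n = s_{n-1}+a_n = 2s_{n-1}+O(\theta(s_{n-1}))$, dividing gives
\[
d_n = d_{n-1}\cdot\frac{2s_{n-1}}{s_n} + O\!\left(\frac{\theta(s_{n-1})}{s_n}\right) = d_{n-1}\left(1 + O\!\left(\frac{\theta(s_{n-1})}{s_{n-1}}\right)\right) + O\!\left(\frac{\theta(s_{n-1})}{s_{n-1}}\right).
\]
The condition $\theta(k)\ll k/(\log k)^2$ is exactly what makes $\sum_n \theta(s_{n-1})/s_{n-1}$ converge: since $s_{n-1}$ grows at least geometrically (roughly like $2^n$ up to lower-order factors), $\theta(s_{n-1})/s_{n-1} \ll 1/(\log s_{n-1})^2 \ll 1/n^2$, and $\sum 1/n^2 < \infty$. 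A telescoping/Cauchy-criterion argument on the perturbed recursion then shows $(d_n)$ is Cauchy, hence $d_n\to\delta$ for some limit $\delta\in[0,1]$.

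Finally I would upgrade convergence along the sparse sequence $(s_n)$ to genuine density of $P(A)$. For $x\in[s_{n-1},s_n]$ I would bound $f(x)$ between $f(s_{n-1}) = d_{n-1}s_{n-1}$ and $f(s_n)=d_n s_n$, but more carefully: within the block $[s_{n-1},s_n]$ the set $P(A)$ looks like $a_n + (P_{n-1}\cap[x-a_n,\,\cdot\,])$ shifted, so $f(x) - f(s_{n-1})$ is comparable to $f(x-a_n)$, giving a self-similar estimate $f(x) \approx f(s_{n-1}) + f(x - a_n)$ valid up to the $O(\theta(s_{n-1}))$ junction error. Iterating this and using $\delta$-regularity on the pieces, together with $s_n/s_{n-1}\to 2$, pins down $f(x)/x\to\delta$ uniformly. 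The main obstacle I anticipate is the last step: controlling $f(x)/x$ for $x$ deep inside a block requires knowing that $P_{n-1}$ is itself ``$\delta$-regular'' at scale $x-a_n$, not just at its endpoint $s_{n-1}$ — so the convergence of $d_n$ and the uniformity in $x$ have to be proved together by a single induction, with the error terms $\theta(s_{n-1})/s_{n-1}$ summable enough to absorb the accumulated slack.
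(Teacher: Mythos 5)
Your proposal is correct and follows essentially the same route as the paper: the recursion $|P_n|=2|P_{n-1}|+O(\theta(s_{n-1}))$ combined with $s_n\gg 2^n$ gives $\delta_n-\delta_{n-1}=O(1/n^2)$, hence convergence to some $\delta$, and then an iterative decomposition of a general $x$ handles intermediate scales. The ``obstacle'' you flag at the end is in fact a non-issue once your self-similar estimate $f(x)\approx f(s_{n-1})+f(x-a_n)$ is iterated to completion: greedily writing $x=a_{n_1+1}+\cdots+a_{n_j+1}+z$ with $z=o(x)$, every contribution except the negligible tail is a \emph{full} block counted at its endpoint (size $\delta_{n_{i+1}}s_{n_{i+1}}$ with $\sum_i s_{n_{i+1}}=x+o(x)$), so no $\delta$-regularity at intermediate scales and no joint induction are needed.
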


\begin{proof}
We first prove that there exists a real number $\delta$ such that
$$
|P(A)(s_n)|=\big(\delta+o(1)\big)s_n\quad \text{as $n \to \infty$.}
$$
Let $n\ge2$ be large enough. Then
$$
P(A)\cap[1,s_n]= \Big(P(A)\cap[1,s_{n-1}]\Big)\cup
\Big(P(A)\cap(s_{n-1},s_{n}-\theta(s_{n-1}))\Big).
$$
Since $a_n\ge s_{n-1}-\theta(s_{n-1})$, we have $P(A)\cap(s_{n-1},s_{n}]\supseteq a_n+P(A)\cap(\theta(s_{n-1}),s_{n-1}]$, thus
\begin{equation}\label{eq_low}
\Big|P(A)\cap[1,s_n]\Big|\ge 2\Big|P(A)\cap[1,s_{n-1}]\Big|-2\theta(s_{n-1})-1.
\end{equation}
On the other hand,
$$
P(A)\cap[1,s_n]\subseteq \left(P(A)\cap[1,s_{n-1}]\right)\cup \left(a_n+P(A)\cap[1,s_{n-1}]\right)\cup [s_n-\theta(s_n),s_n],
$$
since $a_{n+1}\geq s_n-\theta(s_n)$. Therefore,
\begin{equation}\label{eq_upp}
\Big|P(A)\cap[1,s_n]\Big|\le 2\Big|P(A)\cap[1,s_{n-1}]\Big|+\theta(s_{n})+1.
\end{equation}
Observe that $s_n=a_n+s_{n-1}\le 2s_{n-1}+\theta(s_{n-1})$,
hence letting
$$
\delta_n=\frac{\Big|P(A)\cap[1,s_n]\Big|}{s_n},
$$
we obtain from \eqref{eq_low} and \eqref{eq_upp} that
\begin{equation}\label{delta}
\delta_n-\delta_{n-1}=O\left(\frac{\theta(s_{n})}{s_n}\right).
\end{equation}
Now, we show that $s_n\gg 2^{n}$. Since 
\begin{equation}\label{thetabecs}
s_n=s_{n-1}+a_n\geq 2s_{n-1}-\theta(s_{n-1})=s_{n-1}\left( 2-\frac{\theta(s_{n-1})}{s_{n-1}} \right),\end{equation}
the condition $\theta(k)\ll \frac{k}{(\log k)^2}$ implies that from \eqref{thetabecs} we obtain that $s_n\gg 1.5^n$. Therefore, in fact, for large enough $n$ we have $s_n\geq s_{n-1} \left(2- \frac{c}{n^2} \right)$ with some $c>0$. Now, let $10c<K$ be a fixed integer. For $K<n$ we have 
$$s_n\geq s_K\prod\limits_{i=K+1}^{n} \left(2- \frac{c}{i^2} \right) \geq s_K \left[ 2^{n-k}-2^{n-k-1} \sum\limits_{i=K+1}^{n} \frac{c}{i^2}   \right]\gg 2^n,$$
since $\sum\limits_{i=K+1}^{n} \frac{c}{i^2}<1/10$.
Hence, $s_n\gg 2^{n}$ indeed holds.

Therefore, using the assumption on $\theta$ we obtain that $\frac{\theta(s_{n})}{s_n}\ll \frac{1}{n^2}$. So \eqref{delta} yields that
$$
\delta_n-\delta_{n-1}=O(n^{-2}).
$$
Therefore, the sequence $\delta_n$ has a limit which we denote by $\delta$.
Furthermore, observe that  
\begin{equation}\label{deltakonv}
\delta_n=\delta+O(1/n).
\end{equation}

The next step is to consider an arbitrary sufficiently large positive integer $x$ and decompose it as
$$
x=a_{n_1+1}+a_{n_2+1}+\cdots+a_{n_j+1}+z,
$$
where $n_1>n_2>\cdots>n_j>k$ and $0\le z$ are defined in the following way. (Here $k$ is a fixed, sufficiently large positive integer.) The index $n_1$ is chosen in such a way that $a_{n_1+1}\le x<a_{n_1+2}$. If $x-a_{n_1+1}\geq a_{n_1}$, then $n_2=n_1-1$, otherwise $n_2$ is the largest index for which $x-a_{n_1+1}\geq a_{n_2+1}$. The indices $n_3,n_4,\dots$ are defined similarly. We stop at the point when the next index would be at most $k$ and set $z:=x-a_{n_1+1}-a_{n_2+1}-\cdots-a_{n_j+1}$. As $z\leq \theta(s_{n_1+1})+s_k$, we have 
\begin{equation}\label{zkicsi}
z=o(x).
\end{equation}

Furthermore, let
$$
b_i=a_{n_1+1}+a_{n_2+1}+\cdots+a_{n_i+1},\quad i=0,1,\dots,j.
$$
(The empty sum is $b_0:=0$, as usual.)


Let $X_0:=(0,s_{n_1}-\theta(s_{n_1}))$ and for $1\leq i\leq j-1$ let $X_i:=(b_i+\theta(s_{n_i}),b_i+s_{n_{i+1}}-\theta(s_{n_{i+1}}))$ and consider 
\begin{multline*}
X:=X_0\cup X_1 \cup \dots \cup X_{j-1}=\\
=(0,s_{n_1}-\theta(s_{n_1}))\cup (b_1+\theta(s_{n_1}),b_1+s_{n_2}-\theta(s_{n_2}))\cup 
\dots \cup (b_{j-1}+\theta(s_{n_{j-1}}),b_{j-1}+s_{n_j}-\theta(s_{n_j})).
\end{multline*}

Note that in this union each element  appears at most once, since according to the definition of $\theta$ the sets $X_0,X_1,\dots,X_{j-1}$ are pairwise disjoint as
$$b_i+s_{n_{i+1}}-\theta(  s_{n_{i+1}}   )\leq b_{i+1}=b_i+a_{n_{i+1}+1}$$
holds for every $0\leq i\leq j-2$.

The set of those elements of $[1,x]$ that are not covered by $X$ is:
\begin{multline*}
[1,x]\setminus X=[s_{n_1}-\theta(s_{n_1}),b_1+\theta(s_{n_1})]\cup[b_1+s_{n_2}-\theta(s_{n_2}),b_2+\theta(s_{n_2})]\cup\dots \\
\cup [b_{j-2}+s_{n_{j-1}}-\theta(s_{n_{j-1}}),b_{j-1}+\theta(s_{n_{j-1}})]\cup [b_{j-1}+s_{n_j}-\theta(s_{n_j}),x].
\end{multline*}

Therefore,
$$|[1,x]\setminus X|\leq 3\sum\limits_{i=1}^{j} \theta(s_{n_i})+z.$$

Using $\sum\limits_{i=1}^{j} \theta(s_{n_i})\ll \sum\limits_{i=1}^{j} \frac{s_{n_i}}{n_i^2}\ll\frac{x}{k^2}$ and \eqref{zkicsi}, we obtain that $\left|[1,x]\setminus X\right|\leq (\varepsilon_k+o(1))x$, where $\varepsilon_k\to 0$ (as $k\to \infty$). (Note that $\varepsilon_k\ll 1/k^2$.)

That is, the set $X$ covers $[1,x]$ with the exception of a ``small'' portion of size $O(x/k^2)$. Therefore, by letting $k\to \infty$ the density of the uncovered part tends to 0.


Let us consider $P(A)\cap X_i$. If a sum is contained in $P(A)\cap X_i$, then the sum of the elements with indices larger than $n_{i+1}$ is $b_i$. Otherwise, the sum is either at most $b_i+\theta(s_{n_i})$ or at least $b_{i}+s_{n_{i+1}}-\theta(s_{n_{i+1}})$.

Therefore, $P(A)\cap X_i=(b_i+P(\{a_1,a_2,\dots,a_{n_{i+1}}\}))\cap X_i$.

Hence,
$$\delta_{n_{i+1}}s_{n_{i+1}}-2\theta(s_{n_{i+1}})-1\leq |P(A)\cap X_i|\leq \delta_{n_{i+1}}s_{n_{i+1}}.$$

Therefore,  
\begin{multline}\label{PAalso}
|P(A)\cap[x]|\geq \sum\limits_{i=0}^{j-1} \left( \delta_{n_{i+1}}s_{n_{i+1}}-2\theta(s_{n_{i+1}})-1 \right) \geq \\
\geq \delta x -\delta z + \delta \sum\limits_{i=0}^{j-1} (s_{n_{i+1}}-a_{n_{i+1}+1})+\sum\limits_{i=0}^{j-1} (\delta_{n_{i+1}}-\delta)s_{n_{i+1}}-2\sum\limits_{i=0}^{j-1} \left(\theta(s_{n_{i+1}})+1 \right)
\end{multline} 
and
\begin{multline}\label{PAfelso}
|P(A)\cap[x]|\leq \sum\limits_{i=0}^{j-1}  \delta_{n_{i+1}}s_{n_{i+1}} \leq \\
\leq \delta x -\delta z + \delta \sum\limits_{i=0}^{j-1} (s_{n_{i+1}}-a_{n_{i+1}+1})+\sum\limits_{i=0}^{j-1} (\delta_{n_{i+1}}-\delta)s_{n_{i+1}}
\end{multline} 
Now, observe that
\begin{itemize}
    \item $|z|=o(x)$ by \eqref{zkicsi},
    \item $\sum\limits_{i=0}^{j-1} |s_{n_{i+1}}-a_{n_{i+1}+1}|=o(x)$ by using $|s_{n_{i+1}}-a_{n_{i+1}+1}|=\theta(s_{n_{i+1}})$ and $\sum\limits_{i=0}^{j-1} a_{n_{i+1}+1}\leq x$,
    \item $\sum\limits_{i=0}^{j-1} (\delta_{n_{i+1}}-\delta)s_{n_{i+1}}\ll x/k$ by using \eqref{deltakonv}. Letting $k\to \infty$ this term is also of size $o(x)$.
\end{itemize}
Hence, we obtain from \eqref{PAalso} and \eqref{PAfelso} that $|P(A)\cap[x]|=\delta x+o(x)$.




\end{proof}

\section{\bf Density of product sets}

For any semigroup $G$
and any subset $A\subseteq G$,  we denote by $A^2$ the product set
$$
A^2=A\cdot A=\{ab\,:\, a,b\in A\}.
$$

In this section we focus on the case $G=(\mathbb{N},\cdot)$, the semigroup (for multiplication) of all positive integers. The restricted case $G=\mathbb{N}\setminus\{1\}$ is even more interesting, since $1\in A$ implies $A\subset A^2$.

The sets of integers satisfying the small doubling
hypothesis $\mathbf{d}(A+A)=\mathbf{d}A$ are well described
through Kneser's theorem.  The similar question for the product set does not plainly lead to a strong description. We can restrict our attention to sets $A$ such that
$\gcd(A)=1$, since by setting $B:=\frac{1}{\gcd{A}}A$ we have $\mathbf{d}A=\frac{1}{\gcd(A)}\mathbf{d}B$ and $\mathbf{d}A^2=\frac{1}{(\gcd(A))^2}\mathbf{d}(B^2)$.

\begin{exe}
i) Let $A_{\mathrm{nsf}}$ be the set of all non-squarefree integers. Letting $A=\{1\}\cup A_{\mathrm{nsf}}$ we have  $A^2=A$ and
$$
\mathbf{d}A=1-\zeta(2)^{-1}.
$$
ii) However, while $\gcd(A_{\mathrm{nsf}})=1$, we have
$$
\mathbf{d}A_{\mathrm{nsf}}^2<\mathbf{d}A_{\mathrm{nsf}}=1-\zeta(2)^{-1}.
$$
iii) Furthermore, the set $A_{\mathrm{sf}}$ of all squarefree integers satisfies
$$
\mathbf{d}A_{\mathrm{sf}}=\zeta(2)^{-1} \text{ and }
\mathbf{d}A_{\mathrm{sf}}^2=\zeta(3)^{-1},
$$
since $A_{\mathrm{sf}}^2$ consists of all cubefree integers.\\[0.5em]
iv) Given a positive integer $k$, the set $A_k=\big\{n\in\mathbb{N}\,:\,
\gcd(n,k)=1\big\}$ satisfies
$$
A_k^2=A_k\quad\text{and}\quad \mathbf{d}A_k=\frac{\phi(k)}k,
$$
where $\phi$ is Euler's totient function.
\end{exe}

We have the following result:

\begin{prop}
For any positive $\alpha<1$ there exists a set $A\subset \mathbb{N}$ such that
$\mathbf{d}A>\alpha$ and  $\mathbf{d}A^2<\alpha$.
\end{prop}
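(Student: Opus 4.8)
The plan is to build $A$ as a union of sets of multiples of primes: fix a finite set $S$ of primes (depending on $\alpha$) and put $A=A_S:=\{n\in\mathbb N:\ p\mid n\text{ for some }p\in S\}$. Note $1\notin A_S$, which is essential, since $1\in A$ would force $A\subseteq A^2$ and hence $\mathbf{d}A^2\ge\mathbf{d}A$. The crucial first step is an exact description of $A_S^2$. Splitting integers according to how many primes of $S$ divide them, I would show that (up to finitely many small integers) $n\in A_S^2$ precisely when either at least two primes of $S$ divide $n$, or a single prime $p\in S$ divides $n$ with $v_p(n)\ge2$: in the two complementary cases (no prime of $S$ divides $n$; exactly one does, to the first power) every factorisation $n=ab$ is forced to put that single $S$-prime into one factor, leaving the other coprime to $S$ and hence outside $A_S$; in the remaining cases the splitting $n=p\cdot(n/p)$ does the job. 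Consequently $\mathbb N\setminus A_S^2$ is, up to a finite set, the disjoint union of $\{n:\ p\nmid n\ \forall p\in S\}$ and $\bigcup_{p\in S}\{n:\ v_p(n)=1,\ q\nmid n\ \forall q\in S\setminus\{p\}\}$; both are unions of residue classes modulo $\big(\prod_{p\in S}p\big)^2$, so $\mathbf{d}A_S$ and $\mathbf{d}A_S^2$ exist, and writing $P(S)=\prod_{p\in S}(1-1/p)$, $T(S)=\sum_{p\in S}1/p$ one obtains
$$\mathbf{d}A_S=1-P(S),\qquad \mathbf{d}A_S^2=1-\big(1+T(S)\big)P(S).$$

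Since $T(S)>0$ this already gives $\mathbf{d}A_S^2<\mathbf{d}A_S$; what is left is a covering argument. With $\beta:=1-\alpha$, the proposition holds as soon as we find a finite prime set $S$ with $\beta$ in the open interval $I_S:=\big(P(S),(1+T(S))P(S)\big)$. I would use two families of $S$. For singletons $S=\{p\}$ — where $A_{\{p\}}=p\mathbb N$, $A_{\{p\}}^2=p^2\mathbb N$ — we have $I_{\{p\}}=(1-1/p,\,1-1/p^2)$, and consecutive such intervals overlap because the prime following $p$ is $<p^2$ (Bertrand); hence $\bigcup_pI_{\{p\}}=(1/2,1)$, covering every $\alpha\in(0,1/2)$. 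For initial segments $S_j=\{p_1,\dots,p_j\}$, set $P_j=P(S_j)$, $T_j=T(S_j)$, $g_j=(1+T_j)P_j$: the sequences $(P_j)$ and $(g_j)$ are strictly decreasing, $P_j\to0$ because $\sum_p1/p=\infty$, and $g_j\le(1+T_j)e^{-T_j}\to0$ because $T_j\to\infty$; moreover $I_{S_{j-1}}$ meets $I_{S_j}$, which boils down to the inequality $T_{j-1}>1/(p_j(p_j-1))$, true for $j\ge2$ since $T_{j-1}\ge1/2$. Therefore $\bigcup_{j\ge1}I_{S_j}$ is an interval with infimum $0$ and supremum $g_1=3/4$, i.e. it equals $(0,3/4)$, covering every $\alpha\in(1/4,1)$. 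Together the two families cover all of $(0,1)$.

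I expect the genuine obstacle to be this last covering step near $\alpha=1$, where the intervals $I_S$ are short and crowd towards $1$: one must simultaneously control $P(S)$ (it must drop below $\beta$) and $(1+T(S))P(S)$ (it must stay above $\beta$), so the proof really needs both the divergence of $\sum_p1/p$ and the quantitative overlap estimate $T_{j-1}>1/(p_j(p_j-1))$ that prevents the discrete family of intervals from skipping over the target value. The preliminary identification of $A_S^2$ is elementary but must be carried out with care, since the whole density computation depends on getting it exactly right.
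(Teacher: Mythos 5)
Your proof is correct and takes essentially the same route as the paper: for large $\alpha$ the paper uses exactly your initial-segment sets $\bigcup_{i\le r}p_i\mathbb{N}$ with the same density formulas $1-P$ and $1-(1+T)P$ and the same consecutive-overlap covering of an interval, while for small $\alpha$ it uses $k\mathbb{N}$ for all integers $k\ge2$ (so the intervals $[1/(k+1),1/k)$ tile $(0,1/2)$ directly, with no need for Bertrand's postulate). The remaining differences are cosmetic: you phrase everything in terms of $\beta=1-\alpha$ and complements, and you restrict the first family to prime moduli.
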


\begin{proof}
Assume first that $\alpha<1/2$.\\
For $k\ge 1$ let $A_k=k\mathbb{N}=\{kn,\ n\ge1\}$, then $A_k^2=k^2\mathbb{N}$. Therefore, $\mathbf{d} A_k=1/k$ and $\mathbf{d} (A_k^2)=1/k^2$.
If $1/(k+1) \le \alpha < 1/k$, then $A_k$ satisfies the requested condition. Since $\bigcup\limits_{k\geq 2}\left[ \frac{1}{k+1},\frac{1}{k} \right)=(0,1/2)$, an appropriate $k$ can be chosen for every $\alpha\in (0,1/2).$ \\[0.5em]
Assume now that $1>\alpha\ge1/2$.\\
Let $p_1<p_2<\cdots$ be the increasing sequence of prime numbers and
$$
B_r:=\bigcup_{i=1}^rp_i\mathbb{N}.
$$
The complement of the set $B_r$ contains exactly those positive integers that are not divisible by any of $p_1,p_2,\dots,p_r$, thus we have
$$\mathbf{d}(B_r)=1-\prod_{i=1}^r\left(1-\frac1{p_i}\right)=:\gamma_r.$$
Similarly, the complement of the set $B_r^2$ contains exactly those positive integers that are not divisible by any of $p_1,\dots,p_r$ or can be obtained by multiplying such a number by one of $p_1,\dots,p_r$. Hence, we obtain that
$$\mathbf{d}(B_r^2)=1-\left(1+\sum\limits_{i=1}^r \frac{1}{p_i}\right)\prod_{i=1}^r\left(1-\frac1{p_i}\right)=:\beta_r.$$
Note that 
\begin{equation}\label{intersect}
\beta_{r+1}=1-\left(1+\sum\limits_{i=1}^{r+1} \frac{1}{p_i}\right)\left(1-\frac{1}{p_{r+1}}\right)\prod_{i=1}^r\left(1-\frac1{p_i}\right)<1-\frac{3}{2}\cdot\frac{2}{3}\cdot\prod_{i=1}^r\left(1-\frac1{p_i}\right)=\gamma_r.
\end{equation}
As $(\beta_1,\gamma_1)=(1/4,1/2)$, moreover $(\beta_r)_{r=1}^\infty$ and $(\gamma_r)_{r=1}^\infty$ are increasing sequences satisfying \eqref{intersect} and $\lim \gamma_r =1$, we obtain that $[1/2,1)$ is covered by $\bigcup\limits_{r=1}^\infty (\beta_r,\gamma_r)$. That is, for every $\alpha\in[1/2,1)$ we have $\alpha\in (\beta_r,\gamma_r)$ for some $r$, and then $A=B_r$ is an appropriate choice.

\end{proof}

We pose two questions about the densities of $A$ and $A^2$.

\begin{quest}
 If $1\in A$ and $\mathbf{d}A=1$, then $\mathbf{d}(A^2)=1$, too.
Given two integers $k,\ell$, the set
 $$
 \{n\in\mathbb{N}\,:\, \gcd(n,k)=1\}\cup k\ell \mathbb{N}
 $$
 is multiplicatively stable.
What are the sets $A$ of positive integers such that $A^2=A$ or less
restrictively
$$
1\in A\text{ and }1>\mathbf{d}A^2=\mathbf{d}A>0\,?
$$
\end{quest}

\begin{quest}
It is clear that  $\mathbf{d}A>0$ implies  $\mathbf{d}A^2>0$, since
$A^2\supset (\min A)A$.

 For any $\alpha\in(0,1)$ we denote
 $$
 f(\alpha):=\inf_{A\subset\mathbb{N}; \ \mathbf{d}A=\alpha}\mathbf{d}A^2.
 $$
 Is it true that $f(\alpha)=0$ for any $\alpha$ or at least for $\alpha<\alpha_0$\,?
\end{quest}

The next result shows that the product set of a set having density 1 and satisfying a technical condition must also have density 1.

\begin{prop}\label{pp1}
Let $1\notin A$ be a set of positive integers with asymptotic density $\mathbf{d}A=1$.
Furthermore, assume that $A$ contains 
an infinite subset of mutually coprime integers $a_1<a_2<\cdots$ such that
$$
\sum_{i\ge1}\frac{1}{a_i}=\infty.
$$
Then the product set $A^2$ also has density $\mathbf{d}(A^2)=1$.
\end{prop}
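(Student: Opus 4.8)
The plan is to show that the complement $E^{(2)}:=\mathbb{N}\setminus A^2$ has upper asymptotic density $0$. The only structural fact I would use is the trivial inclusion $A^2\supseteq a_iA$ for each $i$ (valid because $a_i\in A$), hence $A^2\supseteq\bigcup_{i=1}^N a_iA$ for every $N$. So it suffices to prove that, for each fixed $N$, the complement of $\bigcup_{i=1}^N a_iA$ has upper density at most $\prod_{i=1}^N(1-1/a_i)$, and then to let $N\to\infty$. (Note that if $1\in A$ there would be nothing to prove, since then $A\subseteq A^2$; the hypothesis $1\notin A$ only makes the statement non-vacuous, and it also guarantees $a_i\ge 2$ for all $i$.)

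Write $E:=\mathbb{N}\setminus A$, so that $|E\cap[1,y]|=o(y)$ as $y\to\infty$. Fix $N$. If $m\le x$ and $m\notin\bigcup_{i=1}^N a_iA$, then for every $i\le N$ either $a_i\nmid m$, or $a_i\mid m$ and $m/a_i\in E$. I would split such $m$ into two classes. First, the $m\le x$ divisible by none of $a_1,\dots,a_N$: by the Chinese Remainder Theorem and the pairwise coprimality of the $a_i$, this set is periodic of period $\prod_{i=1}^N a_i$ with density exactly $\prod_{i=1}^N(1-1/a_i)$, so it contributes $\big(\prod_{i=1}^N(1-1/a_i)+o(1)\big)x$ elements. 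Second, if $a_{i_0}\mid m$ for some $i_0\le N$, then necessarily $m/a_{i_0}\in E$, i.e.\ $m\in a_{i_0}E$; the number of such $m\le x$ is at most $\sum_{i=1}^N|E\cap[1,x/a_i]|=o(x)$ (a sum of finitely many $o(x)$-terms, $N$ being fixed). Adding the two contributions yields $|E^{(2)}\cap[1,x]|\le\big(\prod_{i=1}^N(1-1/a_i)+o(1)\big)x$.

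Hence $\overline{\mathbf{d}}\,E^{(2)}\le\prod_{i=1}^N(1-1/a_i)$ for every $N$. Since $\log\prod_{i=1}^N(1-1/a_i)=\sum_{i=1}^N\log(1-1/a_i)\le-\sum_{i=1}^N 1/a_i\to-\infty$ by hypothesis, the right-hand side tends to $0$ as $N\to\infty$, so $\overline{\mathbf{d}}\,E^{(2)}=0$, i.e.\ $\mathbf{d}(A^2)=1$.

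The one point requiring care — and it is minor — is the bookkeeping in the second paragraph: one must isolate the ``$m$ divisible by no $a_i$'' case (handled by pure periodicity, where coprimality is genuinely used) from the ``$m$ divisible by some $a_i$'' case (handled by the density-$0$ property of $E$) \emph{before} combining, and one must keep in mind that the $o(x)$ error terms are controlled only for a fixed $N$, the limit $N\to\infty$ being taken afterwards on the already-established inequality for $\overline{\mathbf{d}}\,E^{(2)}$. I do not foresee any serious obstacle beyond this.
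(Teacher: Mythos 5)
Your proof is correct. It rests on the same skeleton as the paper's: both start from $A^2\supseteq\bigcup_{i\le N}a_iA$ and show that the complement of this finite union has density tending to $\prod_{i=1}^N(1-1/a_i)$, which goes to $0$ precisely because $\sum 1/a_i$ diverges (the paper makes this quantitative via $1-u\le e^{-u}$ and a choice of $k$ with $\sum_{i\le k}1/a_i>-\log\varepsilon$; you simply let $N\to\infty$ at the end, which is equivalent). Where you diverge is in how that complement is estimated. The paper runs a full inclusion--exclusion sieve: it writes $|\bigcap_{a\in A'}X_a|$ as an alternating sum over all $2^k$ subsets $B\subseteq A'$ and needs, for each $B$, the asymptotic $|\{n\le x:\mathrm{lcm}(B)\mid n,\ n/b\in A\ \forall b\in B\}|=\frac{x}{\mathrm{lcm}(B)}(1+o(1))$, with coprimality entering through $\mathrm{lcm}(B)=\prod_{b\in B}b$. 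You instead observe that any $m\le x$ outside $\bigcup_{i\le N}a_iA$ is either divisible by none of the $a_i$ (a purely periodic set of density exactly $\prod(1-1/a_i)$ by CRT) or lies in some $a_iE$ with $E=\mathbb{N}\setminus A$, contributing only $\sum_{i\le N}|E(x/a_i)|=o(x)$. This replaces the $2^k$-term alternating sum and its term-by-term asymptotics with a single union bound against the density-zero set $E$, at the cost of getting only an upper bound on the complement rather than an asymptotic equality for $|A'A(x)|$ --- but an upper bound is all the statement requires. Your bookkeeping (fixing $N$ first, absorbing the finitely many $o(x)$ terms, then letting $N\to\infty$ on the resulting bound for $\overline{\mathbf{d}}\,E^{(2)}$) is handled correctly, so there is no gap.
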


\begin{proof}
Let $\varepsilon>0$ be arbitrary and choose a large enough $k$ such that
\begin{equation}\label{ep}
\sum_{i=1}^k \frac1a_i >-\log\varepsilon.
\end{equation}
Let $x$ be a large integer.
For any $i=1,\dots,k$, the set $A^2(x)$ contains all the products $a_ia$ with $a\le x/a_i$. We shall use a sieve argument. Let $A'$ be a finite subset of $A$ and  $X=[1,x]\cap\mathbb{N}$ for some $x>\max(A')$. For any $a\in A'$, let
$$
X_a=\Big\{n\le x\,:\, a\nmid n\text{ or }\frac na\not \in A\Big\}.
$$
Observe that
$$
X\setminus X_a=(aA)(x).
$$
Then
$$
(A'A)(x)=\bigcup_{a\in A'}\left(X\setminus X_a\right).
$$
By the inclusion-exclusion principle we obtain
$$
|(A'A)(x)|=\sum_{k=1}^{|A'|}(-1)^{j-1}\sum_{\substack{B\subseteq A'\\|B|=j}}\Big|\bigcap_{b\in B}\left(X\setminus X_b\right)\Big|,
$$
whence
\begin{equation}\label{szita}
\Big|\bigcap_{a\in A'}X_a\Big|= \sum_{j=0}^{|A'|}(-1)^{j}\sum_{\substack{B\subseteq A'\\|B|=j}}\Big|\bigcap_{b\in B}\left(X\setminus X_b\right)\Big|,
\end{equation}
where the empty intersection $\bigcap_{b\in \emptyset}\left(X\setminus X_b\right)$ denotes the full set $X$.

For any finite set of integers $B$ we denote by $\mathrm{lcm}(B)$ the least common multiple of the elements of $B$.
Now, we consider
$$
\bigcap_{b\in B}\left(X\setminus X_b\right)=\Big\{n\le x\,:\, \mathrm{lcm}(B)\mid n\text{ and }  \frac{n}{b}\in A \ (\forall b\in B) \Big\}.
$$
By the assumption $\mathbf{d}A=1$ we immediately get
$$
\Big|\bigcap_{b\in B}\left(X\setminus X_b\right)\Big|=\frac{x}{\mathrm{lcm}(B)}(1+o(1)).
$$
Plugging this into \eqref{szita}:
$$
\Big|\bigcap_{a\in A'(x)}X_a\Big|= x\sum_{k=0}^{|A'|}(-1)^{j}\sum_{\substack{B\subseteq A'\\|B|=j}}\frac1{\mathrm{lcm}(B)}+o(x).
$$

Since the elements of $A'=\{a_1,a_2,\dots,a_k\}$ are mutually coprime,
$$
x- |A'A(x)| = x\sum_{j=0}^k(-1)^j\sum_{1\le a_{i_1}<\cdots<a_{i_j}\le k}
\frac{1}{a_{i_1}a_{i_2}\dots a_{i_j}}+o(x)=x\prod_{i=1}^k\Big(1-\frac1{a_i}\Big)+o(x).
$$
(Note that for $j=0$ the empty product is defined to be 1, as usual.)
Since $1-u\le \exp(-u)$ we get
$$
x- |A'A(x)|  \le x\exp\Big(-\sum_{i=1}^k \frac1{a_i}\Big)+o(x)<\varepsilon x+o(x)
$$
by our assumption \eqref{ep}. Thus finally
$$
|A^2(x)|\ge |A'A(x)|>x(1-\varepsilon-o(1)).
$$
This ends the proof.
\end{proof}

\begin{remark}
Specially, the preceding result applies when $A$ contains a 
sequence of prime numbers $p_1<p_2<\cdots$ 
such that
$\sum_{i\ge1}1/p_i=\infty$.
For this it is enough to assume that
$$
\liminf_{i\to\infty}\frac{i\log i}{p_i}>0.
$$
\end{remark}

However, we do not know how to avoid the assumption on the mutually coprime integers having infinite reciprocal sum. We thus pose the following question:

\begin{quest}
Is it true that $\mathbf{d}A=1$ implies $\mathbf{d}(A^2)=1$?
\end{quest}

\medskip

\subsection*{An example for a set $A$ such that $\mathbf{d}(A)=0$ and $\mathbf{d}(A^2)=1$.}

According to the fact that the multiplicative properties of the elements play an important role, one can build a set whose elements are characterized by their number of prime factors. Let
$$
A=\big\{n\in\mathbb{N}\,:\,\Omega(n)\le 0.75 \log\log n+1\big\},
$$
where $\Omega(n)$ denotes the number of prime factors (with multiplicity) of $n$. 
An appropriate generalisation of the Hardy-Ramanujan theorem (cf. \cite{HR} and \cite{GT}) shows that the normal order of $\Omega(n)$ is $\log\log n$ and the Erd\H os-Kac theorem asserts that
$$
\mathbf{d}\left\{n\in\mathbb{N}\,:\, \alpha<\frac{\Omega(n)-\log\log n}{\sqrt{\log\log n}}<\beta\right\}=\frac1{\sqrt{2\pi}}\int_{\alpha}^{\beta}e^{-t^2/2}dt,
$$
which implies $\mathbf{d}A=0$. Now we prove that $\mathbf{d}A^2=1$. The principal feature in the definition of $A$ is that $A^2$  must contain almost all integers $n$ such that $\omega(n)\le 1.2 \log\log n$.

For $n\in\mathbb{N}$ let
$$
P_+(n):=\max\big\{p: p\text{ is a prime divisor of } n\big\}.
$$
Let us consider first the density of the integers $n$ such that
\begin{equation}\label{p+}
P_+(n) > n\exp(-(\log n)^{4/5}).
\end{equation}
Let $x$ be a large number and write
$$
\left|
\Big\{n\le x\,:\, P_+(n)\le n\exp(-(\log n)^{4/5})
\Big\}\right|=\left|
\Big\{n\le x\,:\, P_+(n)\le x\exp(-(\log x)^{4/5})
\Big\}\right|+o(x).
$$
By a theorem of Hildebrand (cf. \cite{Hi}) on the estimation  of $\Psi(x,z)$, the number of $z$-friable integers  up to
$x$, we conclude that the above cardinality is $x+o(x)$.
Hence, we may avoid the integers $n$ satisfying \eqref{p+}. By the same estimation we may also avoid those integers $n$ for which $P_+(n) < \exp((\log n)^{4/5})$.

Let $n$ be an integer such that $\Omega(n)\leq 1.2\log\log n$ and
$$
\exp((\log n)^{4/5})\le P_+(n)\le n\exp(-(\log n)^{4/5}).
$$
Our goal is to find a decomposition $n=n_1n_2$ with $\Omega(n_i)\le 0.75\log\log n_i+1$, $i=1,2$.

Let
$$
n=p_1p_2\dots p_{t-1}P_+(n),
$$
where $t=\Omega(n)$. We also assume that $p_1\le p_2\le \cdots\le p_{t-1}\le P_+(n)$. Let $m=\frac{n}{P_+(n)}$. Then
$$
\exp((\log n)^{4/5})\le m\le n\exp(-(\log n)^{4/5}).
$$
Let
$$
n_1=p_1p_2\dots p_{u-1}P_+(n)\text{ and }
n_2=p_u\dots p_{t-1},
$$
where $u=\lfloor(t-1)/2\rfloor$. Then
$n_2\ge \sqrt{m}$, which yields
$$
\log\log n_2\ge \log\log m -\log 2\ge 0.8\log\log n-\log 2.
$$
On the other hand,
$$
\Omega(n_2)=t-u\le \frac t2+1 \le 0.6 \log\log n+1\le 0.75 \log\log n_2+\frac{3\log 2}{4}.
$$
Now $n_1\ge  P_+(n)\ge \exp((\log n)^{4/5})$, hence
$$
\log\log n_1\ge 0.8\log\log n
$$
and
$$
\Omega(n_1)\le \frac {t-1}{2}\le 0.6 \log\log n\le 0.75 \log\log n_1$$
Therefore, the following statement is obtained:

\begin{prop}\label{pp3}
The set
$$
A=\big\{n\in\mathbb{N}\,:\,\Omega(n)\le 0.75 \log\log n+1\big\}
$$
has  density $0$ and its product set $A^2$ has density $1$.
\end{prop}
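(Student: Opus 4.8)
The proposition bundles two independent assertions, and the plan is to establish each in turn.

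\textbf{The density of $A$.} I would derive $\mathbf{d}A = 0$ directly from the Erd\H os-Kac theorem recalled above. The defining condition $\Omega(n)\le 0.75\log\log n + 1$ is equivalent to
$$
\frac{\Omega(n)-\log\log n}{\sqrt{\log\log n}} \le \frac{-\frac14\log\log n + 1}{\sqrt{\log\log n}},
$$
whose right-hand side tends to $-\infty$. Hence, for every fixed real $\alpha$, all sufficiently large members of $A$ lie in the set $\{n : (\Omega(n)-\log\log n)/\sqrt{\log\log n} < \alpha\}$, which by Erd\H os-Kac has upper density at most $\frac{1}{\sqrt{2\pi}}\int_{-\infty}^{\alpha}e^{-t^2/2}\,dt$; letting $\alpha\to-\infty$ forces $\overline{\mathbf{d}}A = 0$.

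\textbf{The density of $A^2$.} Here the plan is to exhibit a set $S$ with $\mathbf{d}S = 1$ and $S\subseteq A^2$, which suffices since trivially $\mathbf{d}(A^2)\le 1$. I would take $S$ to be the intersection of three density-one sets: first $\{n : \Omega(n)\le 1.2\log\log n\}$, of density $1$ by Erd\H os-Kac once more (the normalized deviation now tending to $+\infty$), and then $\{n : P_+(n)\le n\exp(-(\log n)^{4/5})\}$ together with $\{n : P_+(n)\ge\exp((\log n)^{4/5})\}$. The last two have density $1$ because, after the standard reduction that replaces $n$ by $x$ at the cost of an $o(x)$ error, the number of exceptional $n\le x$ is $o(x)$, being controlled (directly or through its complement) by Hildebrand's estimate for the count $\Psi(x,z)$ of $z$-friable integers up to $x$.

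For $n\in S$ I would use the factorization $n = p_1\cdots p_{t-1}P_+(n)$ with $t=\Omega(n)$ and $p_1\le\cdots\le p_{t-1}\le P_+(n)$, and set $m = n/P_+(n)$, $u = \lfloor(t-1)/2\rfloor$,
$$
n_1 = p_1\cdots p_{u-1}P_+(n), \qquad n_2 = p_u\cdots p_{t-1},
$$
so that $n = n_1 n_2$. The friability bounds give $\exp((\log n)^{4/5})\le m\le n\exp(-(\log n)^{4/5})$, hence $n_2\ge\sqrt m$ and so $\log\log n_2\ge 0.8\log\log n-\log 2$, while $n_1\ge P_+(n)\ge\exp((\log n)^{4/5})$ gives $\log\log n_1\ge 0.8\log\log n$. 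Since also $\Omega(n_1),\Omega(n_2)\le t/2+1\le 0.6\log\log n+1$, substituting these estimates into the definition of $A$ yields $n_1,n_2\in A$ for all large $n$, hence $n\in A^2$. Thus $S\subseteq A^2$ apart from finitely many integers, and $\mathbf{d}(A^2)=1$.

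\textbf{Where the difficulty lies.} The one genuinely analytic ingredient beyond Erd\H os-Kac is Hildebrand's bound on $\Psi(x,z)$, which is exactly what makes restricting the size of $P_+(n)$ costless in density; without controlling $P_+(n)$ one could not keep $n_1$ and $n_2$ from becoming too small. The real subtlety is the joint choice of the three constants — the slope $0.75$ in the definition of $A$, the auxiliary slope $1.2$, and the friability exponent $4/5$ — which must be made so that the factorization, roughly halving the prime-factor count while possibly shrinking $\log\log n_i$ down toward $\frac45\log\log n$, still places both $n_1$ and $n_2$ inside $A$; this is precisely where those numerical values come from, and the remaining work is routine verification of the displayed inequalities.
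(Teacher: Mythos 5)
Your proof follows the paper's argument essentially verbatim: Erd\H os--Kac for $\mathbf{d}A=0$, then Hildebrand's estimate for $\Psi(x,z)$ to reduce to integers $n$ with $\Omega(n)\le 1.2\log\log n$ and $\exp((\log n)^{4/5})\le P_+(n)\le n\exp(-(\log n)^{4/5})$, followed by the identical split $n=n_1n_2$ at $u=\lfloor(t-1)/2\rfloor$ with the same constants $0.75$, $1.2$, $4/5$. The only caveat --- present in the paper's own computation as well --- is that the bound for $n_2$ comes out as $\Omega(n_2)\le 0.75\log\log n_2+1+\tfrac34\log 2$ rather than $+1$, a harmless slack that disappears if the auxiliary slope $1.2$ is tightened slightly.
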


By a different approach we may extend the above result as follows.

\begin{thm}
For every $0\leq \alpha\leq \beta\leq 1$ there exists a set $A\subseteq \mathbb{N}$ such that $\mathbf{d}A=0$, $\underline{\d}(A\cdot A)=\alpha$ and $\overline{\d}(A\cdot A)=\beta$.
\end{thm}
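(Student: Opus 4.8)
The plan is to realize $A$ as a level set of $\Omega$, with the threshold placed at \emph{half} the normal order $\log\log n$ so that $\mathbf{d}A=0$ is automatic, and with a lower-order oscillating correction of size $\sqrt{\log\log n}$ that, through the Erd\H os--Kac theorem, prescribes the density of the product set and makes it oscillate between $\alpha$ and $\beta$. Write $\Phi(\lambda)=\frac1{\sqrt{2\pi}}\int_{-\infty}^{\lambda}e^{-t^2/2}\,dt$, set $\lambda_\alpha=\Phi^{-1}(\alpha)$ and $\lambda_\beta=\Phi^{-1}(\beta)$ (at the endpoints $\alpha=0$ or $\beta=1$ replacing these by quantities tending to $\mp\infty$ so slowly as to stay $o(\sqrt{\log\log n})$, e.g. $\mp\log\log\log n$), fix $Y_0<Y_1<\cdots$ with $Y_k\to\infty$ and $Y_{k+1}/Y_k\to\infty$, and let $\lambda(\cdot)$ be the step function equal to $\lambda_\beta$ on $[Y_{2k},Y_{2k+1})$ and to $\lambda_\alpha$ on $[Y_{2k+1},Y_{2k+2})$. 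Put
$$
A=\Big\{n\in\mathbb{N}\,:\,\Omega(n)\le\tfrac12\log\log n+\tfrac12\lambda(\log\log n)\sqrt{\log\log n}\Big\}.
$$
Since this threshold is $(\tfrac12+o(1))\log\log n$, it lies below the normal order by $(\tfrac12+o(1))\log\log n\gg\sqrt{\log\log n}$, so Hardy--Ramanujan (or Erd\H os--Kac) gives $\mathbf{d}A=0$.

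Everything then reduces to proving that, up to a set of density $0$,
$$
A\cdot A=S:=\Big\{N\,:\,\Omega(N)\le\log\log N+\lambda(\log\log N)\sqrt{\log\log N}\Big\},
$$
because by Erd\H os--Kac the local density of $S$ near scale $t$ is exactly $\Phi(\lambda(\log\log t))$, i.e. $\beta$ on the even blocks and $\alpha$ on the odd ones. The inclusion $A\cdot A\subseteq S$ (up to density $0$) is the easy direction: for any factorisation $N=N_1N_2$ one has $\Omega(N)=\Omega(N_1)+\Omega(N_2)\le\tfrac12(\log\log N_1+\log\log N_2)+\tfrac12\lambda(\sqrt{\log\log N_1}+\sqrt{\log\log N_2})$, and since $\log\log N_i\le\log\log N$ a short optimisation in the two factor-sizes (the balanced split $N_1\approx N_2\approx\sqrt N$ is extremal up to an additive $O(1)$, the gain of order $\log\log N$ in the main term dominating the loss of order $\sqrt{\log\log N}$ even when $\lambda<0$) bounds $\Omega(N)$ by $\log\log N+\lambda(\log\log N)\sqrt{\log\log N}+O(1)$; the constant is harmless since it shifts $\lambda$ by $O(1/\sqrt{\log\log N})\to0$.

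The reverse inclusion $S\subseteq A\cdot A$ (up to density $0$) is where the real work lies, and it is the step I expect to be the main obstacle. Given $N\in S$ I must split $N=N_1N_2$ with $\Omega(N_i)\le\tfrac12\Omega(N)+O(1)$ \emph{and} both factors so large that $\log\log N_i=\log\log N-o(\sqrt{\log\log N})$; only then does $\Omega(N_i)\le\tfrac12\log\log N_i+\tfrac12\lambda\sqrt{\log\log N_i}$ follow and place $N_i\in A$. A crude friability bound (as in Proposition~\ref{pp3}) only yields $\log\log N_i\ge(1-\eta)\log\log N$ with $\eta$ fixed, and the resulting multiplicative loss would shift the Erd\H os--Kac density away from $\Phi(\lambda)$; this is precisely the difficulty that the coefficient $\tfrac12$ (forced on us by the need to reach intermediate densities) creates. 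The way around it is to exploit that for almost all $N$ the number of prime divisors exceeding $N^{\varepsilon}$, with $\varepsilon=\varepsilon(N)=\exp(-(\log\log N)^{2/5})$, is concentrated near its mean $-\log\varepsilon=(\log\log N)^{2/5}\to\infty$ (a Tur\'an/Poisson-type concentration), so that $N$ has at least two prime factors exceeding $N^{\varepsilon}$. Assigning the two largest such primes to $N_1$ and $N_2$ respectively and then distributing the remaining primes to balance the counts produces a factorisation with $\log\log N_i\ge\log\log N-(\log\log N)^{2/5}=\log\log N-o(\sqrt{\log\log N})$, as required; the exceptional $N$ form a set of density $0$.

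Finally I pass from local to cumulative density. Writing $|A\cdot A\cap[1,x]|=\int_1^x g(t)\,dt+o(x)$ with $g(t)=\Phi(\lambda(\log\log t))\in[\alpha,\beta]$, the hypothesis $Y_{k+1}/Y_k\to\infty$ makes the arithmetic scale at the end of a block enormously larger than at its start, so the average $|A\cdot A\cap[1,x]|/x$ is governed by the current block: it tends to $\beta$ at the right end of a long even block and to $\alpha$ at the right end of a long odd block, while staying in $[\alpha-o(1),\beta+o(1)]$ for every $x$ (the short transition zones near the jumps of $\lambda$, where $g$ merely interpolates in $[\alpha,\beta]$, being negligible at these scales). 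This yields $\overline{\d}(A\cdot A)=\beta$ and $\underline{\d}(A\cdot A)=\alpha$, completing the construction.
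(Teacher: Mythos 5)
Your reduction --- that $A\cdot A$ coincides with $S=\{N:\Omega(N)\le\log\log N+\lambda(\log\log N)\sqrt{\log\log N}\}$ up to a set of density zero --- is the step that fails, and it cannot be repaired by a density-zero correction. Both of your inclusion arguments tacitly assume that $\lambda$ takes the same value at the scale of $N$ and at the scales of its factors (in the easy direction you literally write $\tfrac12\lambda(\sqrt{\log\log N_1}+\sqrt{\log\log N_2})$ with a single $\lambda$). This is legitimate only when $\log\log N$ is at distance $\gg\sqrt{\log\log N}$ (easy direction), resp.\ $\gg(\log\log N)^{2/5}$ (hard direction), from the nearest jump point $Y_j$. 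But the excluded ``transition zones'' are enormous as sets of integers: the zone $\{N:\ Y_j\le\log\log N\le Y_j+\sqrt{Y_j}\}$ is the interval $[\exp(\exp(Y_j)),\exp(e^{\sqrt{Y_j}}\exp(Y_j))]$, which at its right endpoint $x$ occupies a proportion $1-o(1)$ of $[1,x]$; these zones have upper density $1$, not $0$. Inside them the inclusions genuinely break down: just beyond a $\beta\to\alpha$ jump, two elements of $A$ from the $\beta$-side multiply to an $N$ with $\Omega(N)$ up to $\log\log N+\lambda_\beta\sqrt{\log\log N}-\log 2$, which exceeds the $\alpha$-threshold defining $S$; just beyond an $\alpha\to\beta$ jump (while $\log\log N<Y_j+\log 2$), any factorisation $N=N_1N_2$ has at least one factor with $\log\log N_i<Y_j$, which must then meet the stricter $\alpha$-threshold, so elements of $S$ near the $\beta$-threshold cannot lie in $A\cdot A$. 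Since $\underline{\mathbf{d}}$ and $\overline{\mathbf{d}}$ are determined by the counting function at \emph{every} scale $x$, including scales lying inside these zones, the final paragraph has no foundation; indeed your closing parenthetical, in which the local density suddenly ``merely interpolates in $[\alpha,\beta]$,'' is precisely an admission that $A\cdot A\ne S$ on a non-negligible set, contradicting the reduction it is supposed to rest on. There is also a second, more local gap: even far from every $Y_j$, the implication you assert is false without a slack term, since $\Omega(N_i)\le\tfrac12\Omega(N)+O(1)$ and $\log\log N_i\ge\log\log N-(\log\log N)^{2/5}$ only give $\Omega(N_i)\le\tfrac12\log\log N_i+\tfrac12\lambda\sqrt{\log\log N_i}+\tfrac12(\log\log N)^{2/5}+O(1)$ when $\Omega(N)$ sits at the top of its allowed range; you must first discard the shell of $N\in S$ with $\Omega(N)$ within, say, $(\log\log N)^{0.45}$ of the threshold (a set of relative density $o(1)$, so this part is routine to fix, but it is missing).

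The approach itself is probably salvageable, but only after reorganising it into two statements you did not prove: (i) global bounds valid at every $x$ --- namely $A\cdot A$ is contained, up to $O(1)$ in the threshold, in the $\lambda_\beta$-level set (giving $|(A\cdot A)(x)|\le\beta x+o(x)$ everywhere) and contains, up to density zero, the $\lambda_\alpha$-level set minus a shell (giving $\ge\alpha x-o(x)$ everywhere); and (ii) the asymptotic $|(A\cdot A)(x)|=\Phi(\lambda)x+o(x)$ only at block-end scales $x=\exp(\exp(Y_{j+1}))$, where the zone near $Y_j$ is negligible because $Y_{j+1}/Y_j\to\infty$. For comparison, the paper's proof is entirely different and avoids this analytic machinery: it takes $Q=Q_1\cup Q_2$, where $Q_1,Q_2$ consist of integers composed only of primes from two complementary sparse sets of primes chosen so that $\mathbf{d}Q=0$ while $Q\cdot Q$ is exactly the set of integers free of primes from a third set $P_0$ with $\mathbf{d}(Q\cdot Q)=\beta$; the prescribed lower and upper densities $\alpha$ and $\beta$ are then produced by a purely combinatorial recursive deletion of finite segments from $Q$, with no appeal to the Erd\H{o}s--Kac theorem at all.
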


\begin{proof}
We start with defining a set $Q$ such that $\d(Q)=0$ and $ \d(Q\cdot Q)=\beta$.
Let us choose a subset $P_0$ of the primes such that $\prod\limits_{p\in P_0}(1-1/p)=\beta$. Such a subset can be chosen, since $\sum 1/p=\infty$.
Now, let $p_k$ denote the $k$-th prime and let
$$
P_1=\{p_i: i\text{ is odd}\}\setminus P_0,
$$
$$
P_2=\{p_i: i\text{ is even}\}\setminus P_0.
$$
Furthermore, let
 $$
 Q_1=\{n: \text{all prime divisors of $n$ belong to $P_1$}\}
 $$
 and
 $$
 Q_2=\{n: \text{all prime divisors of $n$ belong to $P_2$}\}.
 $$
 Let $Q=Q_1\cup Q_2$. Clearly, $Q\cdot Q=Q_1\cdot Q_2$ contains exactly those numbers that do not have any prime factor in $P_0$, so $\d(Q\cdot Q)=\beta$. For $i\in\{1,2\}$ and $x\in\mathbb{R}$ the probability that an integer does not have any prime factor being less than $x$ from $P_i$ is $\prod\limits_{p< x, p\in P_i}(1-1/p)\leq \frac{1}{\beta}\prod\limits_{p< x, p\in P_i\cup P_0}(1-1/p)\leq \frac{1}{\beta}\exp\left\{-\sum\limits_{\substack{j:\ p_j< x, \\ j\equiv i\pmod{2}}}\frac{1}{p_j}\right\}=O\left( \frac{1}{\beta\sqrt{\log x}}\right)$. Therefore, $\d(Q_1)=\d(Q_2)=0$, and consequently $\d(Q)=0$ also holds.
  If $\alpha=\beta$, then $A=Q$ satisfies the conditions. From now on let us assume that $\alpha<\beta$.

Our aim is to define a subset $A\subseteq Q$ in such a way that $\underline{\d}(A\cdot A)=\alpha$ and $ \overline{\d}(A\cdot A)=\beta$. As $A\subseteq Q$ we will have  $\d(A)=0$ and $\overline{\d}(A\cdot A)\leq \beta$. The set $A$ is defined recursively. We  will define an increasing sequence of integers $(n_j)_{j=1}^\infty$ and sets $A_j$ ($j\in\mathbb{N}$)  satisfying the following conditions (and further conditions to be specified later):
\begin{itemize}
\item[(i)] $A_j\subseteq A_{j-1}$,
\item[(ii)] $A_j\cap [1,n_{j-1}]=A_{j-1}\cap [1,n_{j-1}]$,
\item [(iii)]$A_j\cap [n_j+1,\infty]=Q\cap [n_j+1,\infty]$.
\end{itemize}
That is, $A_j$ is obtained from $A_{j-1}$ by dropping out some elements of $A_{j-1}$ in the range $[n_{j-1}+1,n_j]$.
Finally, we  set $A=\bigcap\limits_{j=1}^\infty A_j$.

Let $n_1=1$ and  $A_1=Q$. 
We  define the sets $A_j$ in such a way that the following condition holds for every $j$ with some $n_0$ depending only on $Q$:
$$
|(A_j\cdot A_j)(n)|\geq \alpha n \text{ for every $n\geq n_0$.}\leqno(*)
$$

Since $d(Q\cdot Q)=\beta>\alpha$, a threshold $n_0$ can be chosen in such a way that $(*)$ holds for $A_1=Q$ with this choice of $n_0$.
Now, assume that $n_j$ and $A_j$ are already defined for some $j$.
We continue in the following way depending on the parity of $j$:
\begin{itemize}
\item[Case I:] $j$ is odd.

Let $n_j<s$ be the smallest integer such that
$$|(A_j\setminus [n_j+1,s])\cdot (A_j\setminus [n_j+1,s])(n)|<\alpha n$$
for some $n\geq n_0$. We claim that such an $s$ exists, indeed it is at most $\lfloor n_j^2/\alpha \rfloor +1$.
For $s'=\lfloor n_j^2/\alpha \rfloor +1$ we have
$$|(A_j\setminus [n_j+1,s'])\cdot (A_j\setminus [n_j+1,s'])(s')|\leq n_j^2<\alpha s'.$$
Hence, $s$ is well-defined (and $s\leq s'$). Let $n_{j+1}:=s-1$ and $A_{j+1}:=A_j\setminus [n_j+1,s-1]$. (Specially, it can happen that $n_{j+1}=n_j$ and $A_{j+1}=A_j$.) Note that $A_{j+1}$ satisfies $(*)$.\\

\item[Case II:] $j$ is even.

Now, let $n_j< s$ be the smallest index for which $|(A_{j}\cdot A_{j})(s)|>(\beta-1/j)s$.

We have $\d(Q\cdot Q)=\beta$ and $A_{j}$ is obtained from $Q$ by deleting finitely many elements of it: $A_{j}=Q\setminus R$, where $R\subseteq [n_j]$. As $\d(Q)=0$, we have that
$$|((Q\cdot Q)\setminus (Q\setminus R)\cdot (Q\setminus R))(n)|\leq |R|^2+\sum\limits_{r\in  R} |Q(n/r)|=o(n),$$
 therefore, $\d(A_{j}\cdot A_{j})=\beta$. So for some $n>n_j$ we have that $(A_{j}\cdot A_{j})(n)>(\beta-1/j)n$, that is, $s$ is well-defined. Let $n_{j+1}:=s$ and $A_{j+1}=A_j$. Clearly, $A_{j+1}$ satisfies $(*)$.

\end{itemize}

This way an increasing sequence $(n_j)_{j=1}^\infty$ and sets $A_j (j\in \mathbb N)$ are defined, these satisfy conditions (i)-(iii). Finally, let us set $A:=\bigcap\limits_{j=1}^\infty A_j$. Note that $A(n_j)=A_j(n_j)$.

We have already seen that $A\subseteq Q$ implies that  $\d(A)=0$ and $\overline{\d}(A\cdot A)\leq \beta$. At  first we show that $\underline{\d}(A\cdot A)\geq \alpha$. Let $n\geq n_0$ be arbitrary. If $j$ is large enough, then $n_j>n$. As $A_j$ satisfies $(*)$ and $(A\cdot A)(n)=(A_j\cdot A_j)(n)$ we obtain that
$$|(A\cdot A)(n)|=|(A_j\cdot A_j)(n)|\geq \alpha n.$$
This holds for every $n\geq n_0$, therefore, $\underline{\d}(A\cdot A)\geq \alpha$.

As a next step, we show that $\underline{\d}(A\cdot A)=\alpha$. Let $j$ be odd. According to the definition of $n_{j+1}$ and $A_{j+1}$ there exists some $n\geq n_0$ such that
$$|((A_j\setminus \{n_{j+1}+1\})\cdot (A_j\setminus \{n_{j+1}+1\}))(n)|<\alpha n. $$
For brevity, let $s:=n_{j+1}+1$. As $A\subseteq A_j$ we get that $|(A\setminus \{s\})\cdot (A\setminus \{s\})(n)|<\alpha n$. Also, $$|(A\cdot A)\setminus ((A\setminus\{s\})\cdot (A\setminus\{s\})(n))|\leq 1+|A(n/s)| \leq 1+|Q(n/s)|,$$
since $A\subseteq Q$.  Thus $|(A\cdot A)(n)|\leq \alpha n+1+|Q(n/s)|\leq n(\alpha+1/n+1/s)$.
Clearly $s=n_{j+1}+1\leq n$, and as $j\to \infty$ we have $n_{j+1}\to \infty$, therefore $\underline{\d}(A\cdot A)=\alpha$.

Finally, we prove that $\overline{\d}(A\cdot A)=\beta$. Let $j$ be even. According to the definition of $A_{j+1}$ and $n_{j+1}$, we have $|(A_{j+1}\cdot A_{j+1})(n_{j+1})|>(\beta-1/j)n_{j+1}$. However, $(A\cdot A)(n_{j+1})=(A_{j+1}\cdot A_{j+1})(n_{j+1})$, therefore $\overline{\d}(A\cdot A)\geq \lim (\beta-1/j)=\beta$, thus $\overline{\d}(A\cdot A)=\beta$ as it was claimed.

\end{proof}

\end{document}